\title{W-types in Homotopy Type Theory}
\author{Benno van den Berg$^1$}
\address{${}^1$ ILLC, Universiteit van Amsterdam, P.O. Box 94242, 1090 GE Amsterdam, the Netherlands. E-mail: bennovdberg@gmail.com.}
\author{Ieke Moerdijk$^2$}
\address{${}^2$ Radboud Universiteit Nijmegen, Institute for Mathematics, Astrophysics, and Particle Physics, Heyendaalseweg 135, 6525 AJ Nijmegen, the Netherlands. E-mail: i.moerdijk@math.ru.nl.}
\date{23 November, 2015}
\newcommand{\op}{\ensuremath{^{\mathrm{op}}}}
\newcommand{\cat}[1]{\ensuremath{\mathbf{#1}}}
\begin{document}

\maketitle

\begin{abstract}
We will give a detailed account of why the simplicial sets model of the univalence axiom due to Voevodsky also models W-types. In addition, we will discuss W-types in categories of simplicial presheaves and an application to models of set theory.
\end{abstract}

\section{Introduction}

This paper is concerned with the interpretation of W-types in homotopy type theory. W-types are among the main type constructors in Martin-L\"of type theory, and include the type of natural numbers and many other inductive types \cite{martinlof84}. Moreover, they are an essential ingredient of Aczel's construction of a model of constructive set theory \cite{aczel78}.

Recently, Voevodsky has shown that the category of simplicial sets provides a model of type theory \cite{voevodsky11, kapulkinetal12}. In this model, types are interpreted as Kan complexes and type dependencies are interpreted as Kan fibrations. One of the main new features of this model is that it validates the univalence axiom, which gives a precise formulation of the intuitive idea that a proof of an isomorphism between types amounts to the same thing as the proof of an equality between names of these types. In this paper, we will show how W-types can be interpreted in Voevodsky's model.

In what follows we will presuppose familiarity with the simplicial sets model (for a very readable account, see \cite{kapulkinetal12}) and the classical Quillen model structure on simplicial sets (for which, see \cite{quillen67, goerssjardine99}). But we will review the categorical notion of a W-type and, in particular, its description in categories of presheaves in Section 2. In Section 3 we will show that W-types of Kan fibrations between Kan complexes are again Kan complexes.  Besides W-types, we will also discuss in Section 3 other inductive types (such as general tree types), as well as coinductive types. In Section 4 we show that the simplicial model also supports a form of quotient types and discuss the connection to Aczel's model of constructive set theory in type theory \cite{aczel78}. Finally, Section 5 will contain some remarks about how to extend these results to other model categories, in particular to certain categories of simplicial presheaves.

The main results of this paper were briefly announced at the MAP conference in Leiden (November 2011). Later, we learned that the fact that W-types are Kan (\reftheo{Wtypesinunivmodelwithdegree} below) was probably known to Voevodsky; cf. the closing sentence in \cite{voevodsky11}. Both authors wish to thank the Netherlands Organisation for Scientific Research (NWO) for financial support and the first author wishes to thank the Institute for Advanced Study for giving him the opportunity to finish this paper under such excellent working conditions. Finally, we are grateful to the referees for a careful reading of the manuscript.

\section{W-types}

We start by recalling the categorical definition of a W-type from \cite{moerdijkpalmgren00} (but see also \cite{pareschumacher78, blass83,abbottaltenkirchghani05}).

\begin{defi}{algebracoalgebrafunctor}
Let \ct{E} be a category and $F: \ct{E} \to \ct{E}$ be an endofunctor. Then an \emph{algebra} for the endofunctor $F$ consists of an object $X$ together with a map $\alpha: FX \to X$. A morphism between such algebras $(X, \alpha)$ and $(Y, \beta)$ is an arrow $f: X \to Y$ such that $f \circ \alpha = \beta \circ Ff: FX \to Y$. If it exists, the initial object in this category of $F$-algebras is the \emph{initial algebra} for the endofunctor $F$.

Dually, a \emph{coalgebra} for an endofunctor $F$ consists of an object $X$ together with a map $\alpha: X \to FX$ and a morphism of such coalgebras $(X, \alpha)$ and $(Y, \beta)$ is a map $\beta \circ f = Ff \circ \alpha: X \to FY$. And, if it exists, the \emph{final coalgebra} for the endofunctor $F$ is the terminal object in the category of $F$-coalgebras.
\end{defi}

\begin{defi}{polynomial}
Let \ct{E} be a locally cartesian closed category, and
let $f: B \to A$ be any map in \ct{E}. The \emph{polynomial functor} $P_f$ associated to $f$ is the composite
\diag{ P_f: \ct{E} \ar[r]^{ - \times B} & \ct{E}/B \ar[r]^{\Pi_f} &\ct{E}/A \ar[r]^{\Sigma_A} & \ct{E}, }
where $\Pi_f$ is the right adjoint to pulling back along $f$ and $\Sigma_A$ is the left adjoint to taking the product with $A$. If exists, the initial algebra for this endofunctor is called the \emph{W-type} associated to $f$ and denoted $W(f)$.
\end{defi}

\subsection{W-types in sets.} The category of sets and functions has all W-types. To see this, let us fix a function $f: B \to A$ and  rewrite the polynomial functor in set-theoretic notation:
\[ P_f(X) = \sum_{a \in A} X^{B_a}, \]
where $B_a = f^{-1}(a)$ is the fibre of $f$ above $a \in A$. Then the W-type consists of labelled, well-founded trees, where we imagine that the edges in the tree are directed, pointing towards the root of the tree. The idea behind the labelling is that the nodes of the tree are labelled with elements $a \in A$, while its edges are labelled with elements $b \in B$; and the labelling should be such that, if there is a node labelled with $a \in A$, then there is for every $b \in B_a$ \emph{exactly one} edge pointing towards it that has that label. The following picture hopefully conveys the idea:
\begin{displaymath}
\xymatrix@C=.75pc@R=.5pc{ & & \ldots & & & \ldots & \ldots & \ldots \\
           & & {\bullet} \ar[dr]_u & & a \ar[dl]^v & {\bullet} \ar[d]_x & {\bullet} \ar[dl]_y & {\bullet} \ar[dll]^z \\
*{\begin{array}{rcl}
f^{-1}(a) & = & \emptyset \\
f^{-1}(b) & = & \{ u, v \} \\
f^{-1}(c) & = & \{ x, y, z \} \\
& \ldots &
\end{array}}       & a \ar[drr]_x & & b \ar[d]_y & & c \ar[dll]^z & & \\
       & & & c & & & & }
\end{displaymath}
It may not be immediately obvious that the collection of such trees is a set: but this follows from the fact that every node in the tree is uniquely determined by the finite sequence of elements in $B$ that label the edges in the path from the root to that node.

The collection $W(f)$ of such trees carries the structure of a $P_f$-algebra
\[ {\rm sup}: P_f(W(f)) \to W(f), \]
turning it into the W-type associated to $f$, as follows. If we are given an element $a \in A$ and a function $t: B_a \to W(f)$, then we can create a new tree, by taking a node, the root of the new tree, and labelling it with $a$; then, for every $b \in B_a$ we create an edge pointing towards this root, label it with $b$ and stick onto this edge the tree $t(b)$. This new tree we will denote by ${\rm sup}_a(t)$. In fact, we will think of the trees in the W-type as the result of repeatedly applying this sup-operation, possibly a transfinite number of times.

To make this idea more precise we define by transfinite recursion the notion of \emph{rank} of an element $w \in W(f)$, which is a certain ordinal. In fact, we have a map ${\rm rk}: W(f) \to Ord$ by putting
\[ {\rm rk}({\rm sup}_a (t)) = {\rm sup} \big\{ {\rm rk}(tb) + 1 \, : \, b \in B_a \big\}. \]
In addition, put
\[ W(f)_{\lt \alpha} = \{ w \in W(f) \, : \, {\rm rk}(w) \lt \alpha \}. \]
Note that $W(f)_{\lt 0} = \emptyset$ and $W(f)_{\lt \alpha + 1} \cong P_f(W(f)_{\lt \alpha})$. In addition, there exist mediating maps $W(f)_{\lt \alpha} \to W(f)_{\lt \alpha + 1}$, making $W(f)_{\lt \lambda}$ the colimit of the $W(f)_{\lt \alpha}$ for $\alpha \lt \lambda$, if $\lambda$ is a limit ordinal. This transfinite chain of sets converges to $W(f)$, for if $\kappa$ is a regular cardinal strictly bigger than all $B_a$ (for example, $({\rm sup}\{ |B_a| \, : \, a \in A \})^+$), then one proves by transfinite induction on $w \in W(f)$ that ${\rm rk}(w) \lt \kappa$; hence $W(f) = W(f)_{\lt \kappa}$. This description again makes it clear that $W(f)$ is a set, rather than a proper class.

\subsection{W-types in presheaves} Categories of presheaves also have all W-types. We will now give a concrete description, following \cite{moerdijkpalmgren00}.

Fix a category $\mathbb{C}$ and a map $f: B \to A$ between presheaves over $\mathbb{C}$. We will write
\begin{eqnarray*}
\hat{A} & = & \{ (C, a) \, : \, C \in \mathbb{C}, a \in A(C) \}
\end{eqnarray*}
and for $(C, a) \in \hat{A}$,
\begin{eqnarray*}
\hat{B}_{(C, a)} & = & \{ (\alpha: D \to C, b \in B(D)) \, : \, f_D(b) = a \cdot \alpha \}
\end{eqnarray*}
and $\hat{f}$ for the projection
\[ \sum_{(C, a) \in \hat{A}} \hat{B}_{(C,a)} \to \hat{A}. \]
As a first approximation to the W-type of $f$ in presheaves, consider the W-type $W(\hat{f})$ associated to $\hat{f}$ in the category of sets. Concretely, this means that $W(\hat{f})$ consists of well-founded trees, with nodes labelled by pairs $(C, a) \in \hat{A}$ and edges into such a node labelled with elements from $\hat{B}_{(C, a)}$, with every element from $\hat{B}_{(C, a)}$ occurring exactly once as such a label.

As it happens, we can give $W(\hat{f})$ the structure of a presheaf over $\mathbb{C}$. To do this, we will say that an element ${\rm sup}_{(C, a)} (t)$ lives in the fibre over $C$ and that for any $\alpha: D \to C$ its restriction  is given by the formula:
\[ \big( \, {\rm sup}_{(C, a)} \, (t) \, \big) \cdot \alpha = {\rm sup}_{(D, a \cdot \alpha)} \,  (t \cdot \alpha) \]
where
\[ (t \cdot \alpha)(\beta, b) = t(\alpha\beta, b). \]

As before, we can assign a rank to the elements of $W(\hat{f})$, by transfinite recursion, as follows:
\[ {\rm rk}({\rm sup}_{(C,a)} \, (t))  =  {\rm sup} \, \{ \, {\rm rk}(t(\beta, b)) + 1  \, : \, (\beta, b) \in \hat{B}_{(C,a)} \, \}. \]
Note that if $w \in W(\hat{f})(C)$ and $\alpha: D \to C$, then ${\rm rk}(w \cdot \alpha) \leq {\rm rk}(w).$ Therefore
\[ W(\hat{f})_{\lt \alpha} = \{ w \in W(\hat{f}) \, : \, {\rm rk}(w) \lt \alpha \} \]
defines a subpresheaf of $W(\hat{f})$.

The W-type associated to $f$ is constructed by selecting those elements from $W(\hat{f})$ that are \emph{hereditarily natural}.

\begin{defi}{natandcomptrees} A tree ${\rm sup}_{(C, a)}(t)$ is \emph{composable}, if for any $(\alpha: D \to C, b) \in \hat{B}_{(C, a)}$, the tree $t(\alpha, b)$ lives in the fibre over ${\rm dom}(\alpha)$. If, in addition, the map $t$ is a natural transformation, meaning that for any $(\alpha: D \to C, b) \in \hat{B}_{(C, a)}$ and $\beta: E \to D$ we have
\[ t(\alpha\beta, b \cdot \beta) = t(\alpha, b) \cdot \beta, \]
then the tree ${\rm sup}_{(C, a)}(t)$ will be called \emph{natural}.

The collection of \emph{subtrees} of ${\rm sup}_{(C, a)}(t)$ is defined recursively as the collection consisting of ${\rm sup}_{(C, a)}(t)$ and all the subtrees of the $t(\alpha, b)$. Finally, a tree will be called \emph{hereditarily natural}, if all its subtrees are natural.
\end{defi}

Since any restriction of an hereditarily natural tree is again hereditarily natural, the hereditarily natural trees form a subpresheaf $W(f)$ of $W(\hat{f})$. This defines the W-type in presheaves associated to $f$. In addition, we will put
\[ W(f)_{\lt \alpha} = \{ w \in W(f) \, : \, {\rm rk}(w) \lt \alpha \} = W(\hat{f})_{\lt \alpha} \cap W(f) \subseteq W(\hat{f}). \]
As the intersection of two presheaves, this is again a presheaf. In fact, we again have that $W(f)_{\lt 0} = 0$, that $W(f)_{\lt \alpha + 1} = P_f(W(f)_{\lt \alpha})$, and that $W(f)_{\lt \lambda}$ is the colimit of the $W(f)_{\lt \alpha}$ where $\alpha$ is an ordinal smaller than the limit ordinal $\lambda$. In addition, this chain again converges to $W(f)$; indeed, by choosing $\kappa$ large enough (regular and greater than $|\hat{B}_{(C, a)}|$ for all $(C, a) \in \hat{A}$), we get $W(f) = W(f)_{\lt \kappa}$.

\subsection{Variations} The ideas from the previous paragraphs allow for numerous variations. For example, there are the dependent polynomial functors of Gambino and Hyland (see \cite{gambinohyland04}; this is related to the general tree types of Petersson and Synek \cite{peterssonsynek89}).

\begin{defi}{dependentpolynomial}
Suppose we are given a diagram of the form
\diag{ B \ar[d]_h \ar[r]^f & A \ar[d]^g \\
C & C }
in a locally cartesian closed category \ct{E}.
Then this diagram determines an endofunctor on $\ct{E}/C$,  the \emph{dependent polynomial functor}
\diag{ D_f: \ct{E}/C \ar[r]^(.6){h^*} & \ct{E}/B \ar[r]^{\Pi_f} &\ct{E}/A \ar[r]^{\Sigma_g} & \ct{E}/C. }
\end{defi}

Also functors of the form $D_f$ have initial algebras in the category of sets. To see this, let us first rewrite $D_f$ in set-theoretic notation:
\[ D_f(X)_c = \sum_{a \in A_c} \prod_{b \in B_a} X_{h(b)}. \]
Then its initial algebra is obtained from the W-type of $f$ by selecting from $W(f)$ those trees which satisfy the following additional compatibility condition: if an edge is labelled with some $b \in B$ and the source of this edge is a node labelled with $a \in A$, then we should have $g(a) = h(b)$. As a subset of the W-type, elements in this initial algebra again have a rank; and the initial algebra can be seen as the result of repeatedly applying the $D_f$ operation, starting from the empty set and possibly applying $D_f$ a transfinite number of times. Similar remarks hold for categories of presheaves: initial algebras for dependent polynomial also exist; indeed, they are suitable subobjects of the W-type associated to $f$ and as such also inherit a notion of rank.

Instead of looking at initial algebras, we could also look at final coalgebras.

\begin{defi}{Mtype}
Let \ct{E} be a locally cartesian closed category, and
let $f: B \to A$ be any map in \ct{E}. If it exists, the final coalgebra of the polynomial functor associated to $f$ is called the \emph{M-type} associated to $f$ and denoted $M(f)$.
\end{defi}
M-types also exist both in sets and in presheaves (see \cite{bergdemarchi07a}). The idea here is that we look at trees with the kind of labelling described at the beginning of the section: nodes labelled with elements $a \in A$, edges labelled with elements $b \in B$, in such a way that $B_a$ enumerates the edges into a node labelled with $a \in A$. But the difference is that the M-type consists of \emph{all} such trees, including those that are not well-founded.

Dually, these M-types can be obtained as a limit of a chain:
\diag{ \ldots \ar[r] & P_f(P_f(P_f(1))) \ar[r] & P_f(P_f(1)) \ar[r] & P_f(1) \ar[r] & 1.}
One big difference is that this chain stabilises already at the ordinal $\omega$; in other words, $M(f)$ is the limit of the $P_f^n(1)$ with $n \in \NN$. To see this, write $\tau$ for the coalgebra map $\tau: M(f) \to P_f(M(f))$ and define for every $n \in \NN$ a truncation function $tr_n: M(f) \to P_f^n(1)$, by letting $tr_0$ be the unique map $M(f) \to 1$, and $tr_{n+1}$ be the composite
\diag{ tr_{n+1}: M(f) \ar[r]^(.55){\tau} & P_f(M(f)) \ar[rr]^{P_f(tr_n)} & & P_f^{n+1}(1). }
What the $n$th truncation does is cutting off the tree at level $n$ and replacing the subtrees that have disappeared with the unique element of $1$. To see that the $tr_n: M(f) \to  P_f^n(1)$ form a colimiting cone, the key observation is that every tree is completely determined by its $n$th truncations. And all of this is equally true in categories of presheaves.

\section{Simplicial sets}

In this section we will study W-types in $\SSets$, the category of simplicial sets, in particular in connection with the univalent model of type theory. This univalent model uses the Quillen model structure on simplicial sets \cite{quillen67, quillen69}; of course, it carries several such, but the relevant one here is the classical model structure due to Quillen, in which:
\begin{itemize}
\item weak equivalences are those maps whose geometric realizations are homotopy equivalences.
\item fibrations are those maps that have the right lifting property with respect to horn inclusions (\emph{aka} Kan fibrations).
\item cofibrations are the monomorphisms.
\end{itemize}

As simplicial sets form a presheaf category, the previous section gives us a clear picture of how the W-types look there. The main result of this section will be that if $f: B \to A$ is a Kan fibration, then so is the canonical map $W(f) \to A$. But to prove this we need to know a few more things beyond the fact that the three classes of maps defined above give simplicial sets the structure of a Quillen model category.

\subsection{Properties of the classical model structure on simplicial sets}

For the proof we need the following properties of the standard model structure on simplicial sets:

\begin{prop}{rightproper}
Trivial cofibrations are stable under pullback along Kan fibrations.
\end{prop}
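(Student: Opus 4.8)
The plan is to reduce the statement to a known, purely combinatorial property of the simplicial-sets model structure, namely that it is right proper, and then to upgrade the weak-equivalence conclusion to a trivial-cofibration conclusion using the fact that monomorphisms are automatically stable under pullback. Concretely, suppose we are given a Kan fibration $p: E \to B$ and a trivial cofibration $i: X \to Y$ together with a map $Y \to B$, and form the pullback square
\diag{ X\times_Y E \ar[r] \ar[d]_{i'} & E \ar[d]^{p} \\ \cdots & \cdots }
(I will of course set up the square so that $i'$ is the pullback of $i$ along the map $X\times_Y E\to X$ induced by the base change). The two things to check are that $i'$ is a monomorphism and that $i'$ is a weak equivalence. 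The first is immediate: monomorphisms in any presheaf category are stable under arbitrary pullback, since they are detected pointwise and pullbacks are computed pointwise, so $i'$ is again a cofibration.

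For the second point I would invoke right properness of the classical (Quillen) model structure on simplicial sets. This is a standard fact — it follows, for instance, from the fact that every object is cofibrant, which by a theorem of Reedy (see \cite{goerssjardine99}) implies right properness; alternatively one can cite it directly from \cite{goerssjardine99}. Right properness says exactly that the pullback of a weak equivalence along a fibration is again a weak equivalence. Applying this to the weak equivalence $i$ and the Kan fibration $p$, we conclude that $i'$ is a weak equivalence. Combining the two observations, $i'$ is a map that is simultaneously a cofibration and a weak equivalence, hence a trivial cofibration, which is what we wanted.

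The only subtlety — and the place where one must be slightly careful rather than it being a genuine obstacle — is making sure the pullback one takes is literally the pullback of $i$ along a Kan fibration in the precise sense demanded by right properness. Since $p: E\to B$ is a Kan fibration and the square is a pullback, the base change of $p$ along $Y\to B$ is again a Kan fibration $E' \to Y$, and then $i'$ sits in the pullback square with vertical maps $i$ and (the pullback of) $p$; right properness applies to this square directly. No delicate simplicial combinatorics is needed beyond quoting right properness and the pointwise computation of monos, so the proof is essentially a two-line deduction from cited facts.
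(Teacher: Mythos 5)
Your overall strategy is exactly the one the paper uses: cofibrations are the monomorphisms, hence stable under pullback along anything, so the whole content of the statement is right properness of the classical model structure on simplicial sets. That reduction is fine, and citing right properness directly from the literature (e.g.\ \cite{goerssjardine99}) would be acceptable.

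However, your primary justification for right properness is wrong in direction. The theorem of Reedy you are thinking of says: if every object is \emph{cofibrant}, then weak equivalences are stable under pushout along cofibrations, i.e.\ the model structure is \emph{left} proper; dually, right properness follows when every object is \emph{fibrant}. In $\SSets$ every object is indeed cofibrant, but by no means every simplicial set is Kan, so this argument gives left properness only and says nothing about pullbacks along Kan fibrations. Right properness of $\SSets$ is a genuinely nontrivial fact; the paper sketches the standard proof, namely that geometric realization preserves pullbacks and sends Kan fibrations to Serre fibrations (Quillen), that weak equivalences are detected on realizations, and that homotopy equivalences in spaces are stable under pullback along Serre fibrations (all objects of $\cat{Top}$ being fibrant). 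So your proof stands if you drop the ``all objects cofibrant'' argument and rely on the direct citation or on the realization argument, but as written the main justification you offer for the key input does not work.
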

\begin{proof}
Since the cofibrations are the monomorphisms and hence stable under pullback along any map, it suffices to show that the weak equivalences are stable under pullback along fibrations; \emph{i.e.}, that the model structure is right proper. This is well-known: in fact, it follows from the fact that geometric realization preserves pullbacks, maps Kan fibrations to Serre fibrations \cite{quillen68}, and homotopy equivalences are stable under pullback along Serre fibrations.
\end{proof}

\begin{coro}{univalentPi}
If $f: B \to A$ is a Kan fibration, then $\prod_f: \SSets/B \to \SSets/A$ preserves Kan fibrations.
\end{coro}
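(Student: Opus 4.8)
The plan is to reduce the statement to the right-properness expressed in Proposition~\ref{rightproper}, using the adjunction $f^* \dashv \prod_f$. Recall that in any model category a map is a fibration precisely when it has the right lifting property against all trivial cofibrations; so it suffices to take a morphism $g \colon E \to D$ in $\SSets/B$ whose underlying map of simplicial sets is a Kan fibration, an arbitrary trivial cofibration $m \colon K \to L$ of simplicial sets, and a commutative square with $m$ on the left and $\prod_f g \colon \prod_f E \to \prod_f D$ on the right, and then to exhibit a diagonal filler.

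First I would note that the bottom map of such a square composes with the structure map $\prod_f D \to A$ to give a map $u \colon L \to A$, along which the whole square becomes a square in $\SSets/A$ (with $K$ regarded as an object over $A$ via $u m$). Transposing across the adjunction $f^* \dashv \prod_f$ between $\SSets/A$ and $\SSets/B$, and using that the adjunction isomorphism is natural in both variables, this square corresponds to a commutative square in $\SSets/B$ with $f^*(m) \colon B \times_A K \to B \times_A L$ on the left and $g$ on the right; moreover the diagonal fillers of the two squares correspond to one another under the same transposition. (A filler over $B$ is here the same thing as a filler of simplicial sets, since $E \to B$ factors through $g$.)

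The crucial step is to identify $f^*(m)$ as a trivial cofibration. It is the pullback of $m$ along the projection $B \times_A L \to L$, and this projection is in turn the pullback of $f \colon B \to A$ along $u \colon L \to A$, hence a Kan fibration; so Proposition~\ref{rightproper} applies and shows that $f^*(m)$ is a trivial cofibration. Since $g$ is a Kan fibration, the transposed square admits a filler, and transposing it back produces the filler we need. Thus $\prod_f g$ has the right lifting property against every trivial cofibration, and is therefore a Kan fibration.

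I expect the only point requiring genuine care to be the purely formal verification that the original lifting problem and its transpose across $f^* \dashv \prod_f$ have the same set of solutions — this amounts to a routine chase of the unit and counit of the adjunction — together with the easy but essential observation that $f^*(m)$ really is the pullback of $m$ along a Kan fibration, which is exactly the situation in which Proposition~\ref{rightproper} yields a trivial cofibration. Everything else is standard model-category formalism.
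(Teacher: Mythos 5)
Your argument is exactly the ``straightforward diagram chase'' the paper has in mind: transposing the lifting problem across $f^* \dashv \prod_f$ and using \refprop{rightproper} to see that $f^*(m)$ is a trivial cofibration is the standard way to deduce that $\Pi_f$ preserves fibrations when $f^*$ preserves trivial cofibrations. The proposal is correct and follows the paper's intended route, just with the details written out.
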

\begin{proof}
A straightforward diagram chase.
\end{proof}

\begin{prop}{filteredcolimit}
If $X$ is the filtered colimit of $(X_i \, : \, i \in I)$ and each $X_i \to A$ is a Kan fibration, then so is the induced map $X \to A$.
\end{prop}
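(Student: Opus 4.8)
The plan is to verify directly that the map $X \to A$ has the right lifting property with respect to the horn inclusions $\Lambda^n_k \hookrightarrow \Delta^n$, by reducing each lifting problem to one over a single $X_i$. Suppose then that we are given a commutative square with $\Lambda^n_k$ in the top-left corner, $\Delta^n$ in the bottom-left, $X$ in the top-right, and $A$ in the bottom-right. The first step is to recall that, since $\Lambda^n_k$ has only finitely many nondegenerate simplices, it is a compact (finitely presentable) object of $\SSets$, so that $\SSets(\Lambda^n_k, -)$ commutes with filtered colimits. As filtered colimits in $\SSets/A$ are created by the forgetful functor to $\SSets$, the underlying simplicial set of $X$ is $\operatorname{colim}_i X_i$, and hence the top edge $\Lambda^n_k \to X$ of the square factors as $\Lambda^n_k \to X_j \to X$ for some $j \in I$.

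The second step is to check that replacing $X$ by $X_j$ yields a square that still commutes over $A$. Because $X$ is the colimit in $\SSets/A$, the structure map $X_j \to A$ equals the composite $X_j \to X \to A$; hence $\Lambda^n_k \to X_j \to A$ equals $\Lambda^n_k \to X \to A$, which in turn equals $\Lambda^n_k \to \Delta^n \to A$. Since $X_j \to A$ is a Kan fibration by hypothesis, this square admits a diagonal filler $\Delta^n \to X_j$. Composing it with $X_j \to X$ produces a map $\Delta^n \to X$, and a routine diagram chase — using the two triangle identities for the filler over $X_j$, the factorization $\Lambda^n_k \to X_j \to X = \Lambda^n_k \to X$, and once more $X_j \to X \to A = X_j \to A$ — shows that this $\Delta^n \to X$ is the desired lift in the original square. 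This establishes that $X \to A$ is a Kan fibration.

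There is no serious obstacle here: the whole argument rests on the compactness of the horns together with the fact that filtered colimits of slice categories are computed in $\SSets$, and the only point requiring care is the bookkeeping of the various commutativities over $A$. Indeed, the same proof shows that in any presheaf category the maps having the right lifting property against a fixed set of morphisms between finitely presentable objects are stable under filtered colimits, taken in the appropriate slice over the common codomain.
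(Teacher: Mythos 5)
Your proof is correct and is essentially the paper's argument: the paper likewise deduces the result from the fact that Kan fibrations are characterized by lifting against horn inclusions and that horns, being finite colimits of representables, are compact, so a lifting problem against $X \to A$ factors through some stage $X_j$ where it can be solved. You have simply written out the factorization and the commutativity bookkeeping that the paper leaves as ``immediate.''
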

\begin{proof}
This is immediate from the fact that Kan fibrations are maps which have the right lifting property with respect to horn inclusions and horns are finite colimits of representables.
\end{proof}

\subsection{W-types in simplicial sets} The main result of this section is:

\begin{theo}{Wtypesinunivmodelwithdegree}
If $f: B \to A$ is a Kan fibration between Kan complexes, then for any ordinal $\alpha$ the map $W(f)_{\lt \alpha} \to A$ is also a Kan fibration; in particular, $W(f) \to A$ is.
\end{theo}
\begin{proof}
First of all, more generally, we claim that if $Z$ is a Kan complex then $P_f(Z) \to A$ is a Kan fibration. Indeed, let $X \to Y$ be a trivial cofibration and suppose we have a commuting square
\diag{ X \ar[d] \ar[r]^(.4)K & P_f(Z) \ar[d] \\
Y \ar[r] & A.}
We want to find a map $L: Y \to P_f(Z)$ which makes the two resulting triangles commute. Note that $K$ transposes to a map $k: B \times_A X \to W(f)_{\lt \alpha}$ and $B \times_A  X \to B \times_A Y$ is a trivial cofibration by \refprop{rightproper}. So if $Z$ is Kan, there is a dotted arrow $l$ making
\diag{ B \times_A X \ar[d] \ar[r]^k & Z \\
B \times_A Y \ar@{.>}[ur]_l  }
commute. Taking the transpose of $l$ gives us the desired map $L$.

To prove the theorem, we argue by induction, the case of a limit ordinal $\alpha$ (including $\alpha = 0$) being clear from \refprop{filteredcolimit}. So suppose $W(f)_{\lt \alpha} \to A$ is a Kan fibration. Since $A$ is assumed Kan, $W(f)_{\lt \alpha}$ is too. So by the claim, $W(f)_{\lt \alpha + 1} = P_f(W(f)_{\lt \alpha})\to A$ is Kan.

Since $W(f) = W(f)_{\lt \alpha}$ for sufficiently large $\alpha$, we have as a special case that $W(f) \to A$ is a Kan fibration.
\end{proof}

\subsection{Variations} An easy variation on the previous result would be, for example:

\begin{theo}{initalgdepfibrant}
If we have a diagram
\diag{ B \ar[r]^f \ar[d]_h & A \ar[d]^g \\
C & C}
of Kan fibrations in simplicial sets, then the initial $D_f$-algebra is fibrant in $\SSets/C$.
\end{theo}
\begin{proof}
The general picture is really this: suppose $\Phi$ is an endofunctor on the category of simplicial sets, or any other model category in which fibrant objects are closed under directed colimits. If this endofunctor sends fibrant objects to fibrant objects and has an initial algebra which can be built as the colimit of a sufficiently long chain of $\Phi^\alpha(0)$, then this initial algebra has to be fibrant as well. By considering $D_f$ on $\SSets/C$ we obtain the desired result.
\end{proof}

Dually we have:

\begin{theo}{Mtypesinunivmodel}
If $f: B \to A$ is a Kan fibration between fibrant objects, then $M(f)$ is fibrant as well.
\end{theo}
\begin{proof}
Here the general picture is: suppose $\Phi$ is an endofunctor on the category of simplicial sets, or any other model category, which preserves fibrations and for which $\Phi(1)$ is fibrant. If $\Phi$ has a final coalgebra and it can be obtained as a limit of a sufficiently long chain of $\Phi^\alpha(1)$, then this final coalgebra is fibrant. The desired result follows by specialising to the case $\Phi = P_f$.
\end{proof}

\section{Quotients}

In this section we discuss quotients of equivalence relations on simplicial sets. We will show that the simplicial model of univalent foundations supports a form of quotient types, sufficient for constructing a model of Aczel's constructive set theory.

\subsection{Quotient types in the univalent model.} We first observe:
\begin{prop}{quotients}
If in a commutative triangle
\diag{ Y \ar[rr]^p \ar[dr]_g & & X \ar[dl]^f \\
& A }
with $p$ epic, both $p$ and $g$ are Kan fibrations, then so is $f$.
\end{prop}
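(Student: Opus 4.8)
The plan is to verify the defining lifting property of a Kan fibration for $f$ directly, by using the lifting properties of $p$ and $g$ in tandem, with the epi hypothesis on $p$ guaranteeing that the lift we build for $f$ is honestly a morphism over $A$. So suppose we are given a horn inclusion $\Lambda^n_k \hookrightarrow \Delta^n$ together with a commuting square whose right-hand vertical is $f$: a horn $u \colon \Lambda^n_k \to X$ and an $n$-simplex $a \colon \Delta^n \to A$ with $f u = a|_{\Lambda^n_k}$. The goal is to produce an $n$-simplex $x \colon \Delta^n \to X$ with $x|_{\Lambda^n_k} = u$ and $f x = a$.

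The first step is to lift the horn $u$ through the epimorphism $p$. Since $p \colon Y \to X$ is a Kan fibration and $\Lambda^n_k \hookrightarrow \Delta^n$ is a horn inclusion, and since moreover $Y \to \ast$ factors through nothing special, I would instead argue as follows: a horn is a finite colimit of representables, so by the lifting property of the trivial cofibration $\Lambda^n_k \hookrightarrow \Delta^n$ against the Kan fibration $p$ — wait, that is the wrong direction. The cleaner route: because $p$ is epic and a Kan fibration, I claim every simplex of $X$ lifts to $Y$; in fact $p$ being a Kan fibration which is also epimorphic means $p$ has a section over every simplex, but to lift the whole horn coherently I use that $\Lambda^n_k \to X$ together with the constant data gives a horn in $X$ that I must fill in $Y$. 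Precisely: form the composite horn $\Lambda^n_k \xrightarrow{u} X$; since $p$ is surjective on each $X_m$, pick any simplex $y_0 \colon \Delta^n \to Y$ with $p y_0$ agreeing with $u$ on, say, the $0$th vertex — this is not yet enough. The genuinely correct move is: consider the square with $\Lambda^n_k \hookrightarrow \Delta^n$ on the left, a lift $\tilde u \colon \Lambda^n_k \to Y$ of $u$ along $p$ (which exists because $\Lambda^n_k$ is a finite colimit of representables and $p$ is levelwise surjective, so we may choose compatible lifts of the finitely many nondegenerate simplices — here one must check the compatibility, and this is exactly where epi-ness of $p$ alone may not suffice and one really wants $p$ to be a trivial fibration; but $p$ being an epic Kan fibration does give each fibre nonempty, and lifting a horn is lifting against $\Lambda^n_k$ which is contractible, so a section exists). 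Grant the lift $\tilde u \colon \Lambda^n_k \to Y$ with $p \tilde u = u$.

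Next, use that $g = f p \colon Y \to A$ is a Kan fibration. We have a square: top $\Lambda^n_k \xrightarrow{\tilde u} Y$, bottom $\Delta^n \xrightarrow{a} A$, right vertical $g$, left vertical the horn inclusion; it commutes because $g \tilde u = f p \tilde u = f u = a|_{\Lambda^n_k}$. By the lifting property of $g$ we obtain $y \colon \Delta^n \to Y$ with $y|_{\Lambda^n_k} = \tilde u$ and $g y = a$. Now set $x := p y \colon \Delta^n \to X$. Then $f x = f p y = g y = a$, and $x|_{\Lambda^n_k} = p y|_{\Lambda^n_k} = p \tilde u = u$, so $x$ is the desired filler. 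This completes the lifting, hence $f$ is a Kan fibration.

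The main obstacle is the very first step: producing the horn lift $\tilde u \colon \Lambda^n_k \to Y$ of $u$ along $p$. Levelwise surjectivity of $p$ lets us lift each nondegenerate simplex of $\Lambda^n_k$ individually, but we need these choices to agree on common faces; the clean way to get this is to observe that $\Lambda^n_k$ is a (finite, contractible) complex and $p$ is a Kan fibration, so lifting the map $\Lambda^n_k \to X$ through $p$ amounts to filling a horn-shaped diagram in $Y$ — one builds $\tilde u$ by induction over a filtration of $\Lambda^n_k$ by its skeleta, at each stage using that $p$ has the right lifting property against the boundary inclusions $\partial\Delta^m \hookrightarrow \Delta^m$ that arise — but those are not all anodyne, so here one genuinely uses that $p$ is \emph{epic}: epi-ness plus being a Kan fibration is what makes $p$ behave like a trivial fibration enough to lift the non-anodyne attaching maps. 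I expect the author's proof to finesse this by a direct simplex-chasing argument, possibly replacing $\Lambda^n_k$ by $\Delta^n$ at the outset (lift the entire bottom map's restriction) or by invoking that an epic Kan fibration is, after the relevant pullback, surjective enough; I would present the skeletal-induction lifting carefully as the one nonformal point, and the rest of the argument is the routine two-step diagram chase sketched above.
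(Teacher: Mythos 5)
Your overall two-step skeleton---lift the horn $\alpha:\Lambda^k[n]\to X$ to $Y$ along $p$, fill the lifted horn in $Y$ using the fibration $g$, and push the filler back down with $p$---is exactly the structure of the paper's proof, and your second step is carried out correctly. The problem is the first step, which you yourself flag as the ``main obstacle'' and never actually establish. None of your proposed justifications work: lifting the nondegenerate simplices of $\Lambda^k[n]$ one at a time through the levelwise surjection $p$ gives no compatibility on shared faces; the skeletal induction you sketch needs the right lifting property of $p$ against boundary inclusions $\partial\Delta[m]\hookrightarrow\Delta[m]$, which is precisely the defining property of a \emph{trivial} fibration; and ``epi plus Kan fibration behaves like a trivial fibration'' is simply false --- a nontrivial covering map such as $E\mathbb{Z}/2\to B\mathbb{Z}/2$ is a surjective Kan fibration, yet an edge over the generator with both endpoints prescribed to be the same vertex has no lift, so there is no lifting against $\partial\Delta[1]\hookrightarrow\Delta[1]$. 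Likewise, contractibility of $\Lambda^k[n]$ does not by itself produce a section of $p$ over the horn.

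The missing idea, and the way the paper closes exactly this gap, is to use the epi hypothesis only to lift a \emph{single vertex}, and then to use the fibration property of $p$ once, against an anodyne map. Since $\Delta[0]$ is representable and epimorphisms of simplicial sets are levelwise surjective, the $k$-th vertex $\alpha\circ k:\Delta[0]\to X$ (where $k:\Delta[0]\to\Lambda^k[n]$ picks out the $k$-th vertex) lifts to some $\gamma:\Delta[0]\to Y$ with $p\gamma=\alpha k$. The inclusion $k$ is a strong deformation retract, hence a trivial cofibration, so the commuting square with $\gamma$ on top, $\alpha$ on the bottom, $k$ on the left and the Kan fibration $p$ on the right admits a diagonal filler $\delta:\Lambda^k[n]\to Y$ with $p\delta=\alpha$. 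This $\delta$ is precisely the lift $\tilde u$ your argument needs; from there your remaining steps (fill against $g$ over $\beta$, then compose with $p$) go through verbatim. So the repair is local, but as written the proof has a genuine gap at its one nonformal point, and the heuristics offered to bridge it would not survive scrutiny.
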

\begin{proof}
Consider a commuting square
\diaglab{desired}{ \Lambda^k[n] \ar[d]_i \ar[r]^\alpha & X \ar[d]^f \\
\Delta[n] \ar[r]_\beta & A }
with a horn inclusion $i$ on the left. As $1 = \Delta[0]$ is representable and $p$ is epic, there is a map $\gamma$ making the square
\diag{ \Delta[0] \ar[d]_k \ar[r]^\gamma & Y \ar[d]^p \\
\Lambda^k[n] \ar[r]_\alpha & X }
commute, where $k: \Delta[0] \to \Lambda^k[n]$ picks the $k$th vertex. Note that $k$ is a strong deformation retract and hence a trivial cofibration; in addition, the map $p$ is fibration by assumption, so this square has a diagonal filler $\delta$. But then
\diag{ \Lambda^k[n] \ar[d]_i \ar[r]^\delta & Y \ar[d]^g \\
\Delta[n] \ar[r]_\beta & A }
commutes, so has a diagonal filler $\epsilon$. Now $p\epsilon$ is a diagonal filler for \refdiag{desired}, as:
\[ p \epsilon i = p \delta = \alpha \quad \mbox{ and } \quad f p \epsilon = g \epsilon = \beta. \]
\end{proof}

\begin{coro}{images}
If $f: Y \to X$ is a Kan fibration, then so are the maps in its factorisation as an epi $p$ followed by a mono $i$.
\end{coro}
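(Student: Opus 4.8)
The plan is to obtain the epi-mono factorization of $f$ in the standard way and then apply Proposition \ref{quotients} twice, once to each half of the factorization. Concretely, write $f: Y \to X$ as $Y \xrightarrow{p} I \xrightarrow{i} X$ where $I$ is the image of $f$, $p$ is the surjection onto $I$ and $i$ is the inclusion. Since surjections of simplicial sets are computed levelwise, $p$ is automatically an epimorphism; the task is to show that both $p: Y \to I$ and $i: I \to X$ are Kan fibrations, given that the composite $f = ip$ is.

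First I would show that $i: I \to X$ is a Kan fibration. For this, consider the commutative triangle with vertices $Y$, $I$, $X$: the top map $p: Y \to I$ is epic, and the composite $Y \to X$ is the Kan fibration $f$. To apply Proposition \ref{quotients} in the form it is stated there, I want $p$ and the composite $Y \to X$ to be the two fibrations and the slanted map $I \to X$ to be the conclusion — but this requires knowing that $p$ is already a Kan fibration, which is exactly the other half of what we are proving. So the two halves are not independent, and the order matters: I should establish first that $p$ is a Kan fibration, and only then derive that $i$ is one.

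So the real first step is: $p: Y \to I$ is a Kan fibration. The clean way to see this is to note that $p$ is the pullback of $f: Y \to X$ along the mono $i: I \to X$. Indeed, $I \times_X Y \cong Y$ because $i$ is monic and $p$ factors through $I$ with $ip = f$; under this identification the pullback projection $I \times_X Y \to I$ is exactly $p$. Since Kan fibrations are stable under pullback along any map, and $f$ is a Kan fibration, $p$ is a Kan fibration. (One does not even need properness here, only stability of fibrations under arbitrary pullback, which is part of the definition via the right lifting property.) With $p$ now known to be a Kan fibration and epic, and $f = ip$ a Kan fibration, Proposition \ref{quotients} applied to the triangle with slanted maps $p$ and $i$ and hypotenuse... — more precisely, with $g := p$, $p := p$ in the notation of that proposition is awkward, so I would state it as: in the triangle $Y \xrightarrow{p} I \xrightarrow{i} X$, the map $p$ is epic, and the two maps $p$ and $f = ip$ are Kan fibrations, hence by Proposition \ref{quotients} the remaining map $i$ is a Kan fibration.

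The main obstacle, such as it is, is purely bookkeeping: matching the shape of the triangle in Proposition \ref{quotients} to the shape $Y \xrightarrow{p} I \xrightarrow{i} X$ correctly, and being careful that the proof of one half ($i$ fibrant) genuinely depends on the other half ($p$ fibrant) having been established first. There is no real mathematical difficulty beyond recognizing that $p$ is a pullback of $f$; everything else is a direct appeal to the stated results.
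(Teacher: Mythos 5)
Your proof is correct and follows essentially the same route as the paper: the epi part $p$ is a Kan fibration because it is the pullback of $f$ along the mono $i$ (this is what the paper dismisses as ``clear''), and the mono part $i$ then follows by applying Proposition \ref{quotients} to the triangle $Y \to I \to X$ with $p$ epic. Your remark that the two halves must be handled in this order, since the application of the proposition needs $p$ to already be a fibration, is exactly the implicit structure of the paper's argument.
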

\begin{proof}
For $p$ this is clear and for $i$ this follows from the previous proposition.
\end{proof}

\begin{prop}{eqrel}
If $R$ is an equivalence relation on $Y$ and both projections $R \to Y$ are fibrations, then $Y \to Y/R$ is a fibration as well.
\end{prop}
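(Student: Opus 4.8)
The plan is to solve the Kan lifting problem for the quotient map $q: Y\to Y/R$ directly, using that $\SSets$ is a topos. In a topos every equivalence relation is effective, so $R$ is the kernel pair of $q$: the square with the two projections $p_0,p_1: R\to Y$ on one side and two copies of $q$ on the other is a pullback. We will use two consequences. First, $qp_0=qp_1$. Second, whenever $u,v: T\to Y$ satisfy $qu=qv$, the pair $(u,v): T\to Y\times Y$ factors through $R\rightarrowtail Y\times Y$; that is, there is a map $\rho: T\to R$ with $p_0\rho=u$ and $p_1\rho=v$. (Both statements can also be checked by hand, since colimits in $\SSets$ are computed levelwise, so that $(Y/R)_n=Y_n/R_n$ and $R_n=Y_n\times_{(Y/R)_n}Y_n$.)

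So suppose we are given a lifting problem
\diag{ \Lambda^k[n] \ar[d]_i \ar[r]^\alpha & Y \ar[d]^q \\ \Delta[n] \ar[r]_\beta & Y/R }
with $i$ a horn inclusion. As $q$ is a coequalizer it is epic, hence levelwise surjective, so the $n$-simplex $\beta$ lifts to some $b: \Delta[n]\to Y$ with $qb=\beta$. Now $\alpha$ and $bi: \Lambda^k[n]\to Y$ agree after composing with $q$, so by effectiveness there is $\rho: \Lambda^k[n]\to R$ with $p_0\rho=\alpha$ and $p_1\rho=bi$. Then the square
\diag{ \Lambda^k[n] \ar[d]_i \ar[r]^\rho & R \ar[d]^{p_1} \\ \Delta[n] \ar[r]_b & Y }
commutes, and since $p_1: R\to Y$ is a Kan fibration and $i$ a horn inclusion it admits a diagonal filler $\widetilde\rho: \Delta[n]\to R$ with $\widetilde\rho i=\rho$ and $p_1\widetilde\rho=b$. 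Setting $\widetilde\beta:=p_0\widetilde\rho$ then yields the filler we want for the original square: indeed $\widetilde\beta i=p_0\rho=\alpha$, while $q\widetilde\beta=qp_0\widetilde\rho=qp_1\widetilde\rho=qb=\beta$, the middle equality being $qp_0=qp_1$. Hence $q$ has the right lifting property against all horn inclusions, i.e.\ is a Kan fibration.

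I do not expect a serious obstacle: the argument is essentially formal once $R$ is recognised as the kernel pair of $q$, and the only substantive ingredient is the hypothesis that the second projection $R\to Y$ is a Kan fibration, which is precisely what supplies $\widetilde\rho$. The one point that needs a little care is the appeal to effectiveness of equivalence relations — without it, $\alpha$ and $bi$ agreeing after $q$ would only guarantee that $(\alpha,bi)$ lands in the kernel pair of $q$, which a priori could be strictly larger than $R$, and it is exactly because $\SSets$ is a topos that these coincide.
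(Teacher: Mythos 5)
Your proof is correct and follows essentially the same route as the paper: lift $\beta$ to $\Delta[n]\to Y$ using that $q$ is epic and $\Delta[n]$ is representable, factor the pair $(\alpha, bi)$ through $R$ (the paper does this implicitly by writing the map $(\alpha,\gamma i)$ into $R$, you justify it via effectiveness of equivalence relations), lift against the second projection $R\to Y$, and postcompose with the first projection. The only difference is that you spell out the kernel-pair argument the paper leaves tacit.
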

\begin{proof}
Consider a commuting square
\diaglab{desired2}{ \Lambda^k[n] \ar[d]_i \ar[r]^\alpha & Y \ar@{->>}[d]^q \\
\Delta[n] \ar[r]_\beta & Y/R }
with a horn inclusion $i$ on the left. As $\Delta[n]$ is representable and $q$ is epic, there is a map $\gamma: \Delta[n] \to Y$ such that $q\gamma = \beta$. We do not necessarily have $\gamma i = \alpha$, but we do have that $q \gamma i = q \alpha$ (because both are equal to $\beta i$). So we have a commuting square
\diag{ \Lambda^k[n] \ar[d]_i \ar[r]^(.55){(\alpha, \gamma i) }& R \ar[d]^{\pi_2} \\
\Delta[n] \ar[r]_\gamma & Y, }
in which there must exist a diagonal filler $\delta$. Now $\pi_1 \delta$ is a diagonal filler for \refdiag{desired2}, as:
\[ \pi_1 \delta i = \pi_1 (\alpha, \gamma i) = \alpha \quad \mbox{ and } \quad q \pi_1 \delta = q \pi_2 \delta = q \gamma = \beta. \]
\end{proof}

To state the main result of this subsection, we recall from \cite{carboniceliamagno82, carboni95} that $(s, t): R \to Y \times Y$ is a \emph{pseudo-equivalence relation}, if:
\begin{enumerate}
\item there is a map $\rho: Y \to R$ such that $(s, t) \rho$ is the diagonal map $\Delta_Y: Y \to Y \times Y$.
\item there is a map $\sigma: R \to R$ such that $s\sigma = t$ and $t\sigma = s$.
\item if $P$ is the pullback
\diag{ P \ar[r]^{p_{12}} \ar[d]_{p_{23}} & R \ar[d]^t \\
R \ar[r]_s & Y, }
then there is a map $\tau: P \to R$ such that $s p_{12} = s \tau$ and $t p_{23} = t \tau$.
\end{enumerate}

\begin{coro}{pseudoeqrel}
Suppose $R$ is a pseudo-equivalence relation on a object $Y$ and $R \to Y \times Y$ is a Kan fibration. If $Y$ is fibrant, then so is $Y/R$ and the quotient map $Y \to Y/R$ is a Kan fibration.
\end{coro}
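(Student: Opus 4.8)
The plan is to reduce the statement to \refprop{eqrel} by replacing the pseudo-equivalence relation $R$ with a genuine equivalence relation having the same quotient. First I would factor the map $(s,t): R \to Y \times Y$ as an epimorphism $e: R \to \bar R$ followed by a monomorphism $m = (\bar s, \bar t): \bar R \hookrightarrow Y \times Y$, which exists because $\SSets$ is a topos. The classical observation recalled in \cite{carboniceliamagno82, carboni95} is that the image $\bar R$ of a pseudo-equivalence relation is an honest equivalence relation: in a regular category---and $\SSets$, being a topos, is regular---reflexivity, symmetry and transitivity of $\bar R \hookrightarrow Y \times Y$ are obtained from the data $\rho$, $\sigma$ and $\tau$ respectively, using that regular epimorphisms are stable under pullback and that the image factorisation is well behaved; I would invoke this fact rather than reprove it. Moreover $Y/R = Y/\bar R$: since $e$ is epic and $\bar s \circ e = s$, $\bar t \circ e = t$, a map out of $Y$ coequalises $(s,t)$ exactly when it coequalises $(\bar s, \bar t)$, so the two coequalisers coincide.

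Next I would check that both projections $\bar R \to Y$ are Kan fibrations. The map $(s,t): R \to Y \times Y$ is a Kan fibration by hypothesis, so by \refcoro{images} both $e$ and $m$ are Kan fibrations; in particular $m: \bar R \to Y \times Y$ is. Since $Y$ is fibrant, each projection $Y \times Y \to Y$ is a pullback of $Y \to 1$ and hence a Kan fibration, so the composites $\bar R \to Y \times Y \to Y$ are Kan fibrations too. Now \refprop{eqrel}, applied to the equivalence relation $\bar R$ on $Y$, shows that $q: Y \to Y/\bar R = Y/R$ is a Kan fibration.

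It then remains to see that $Y/R$ is fibrant, and for this I would apply \refprop{quotients} to the commutative triangle $Y \xrightarrow{q} Y/R \to 1$: the map $q$ is epic (being a coequaliser) and a Kan fibration by the previous step, while $Y \to 1$ is a Kan fibration since $Y$ is fibrant; hence $Y/R \to 1$ is a Kan fibration, that is, $Y/R$ is fibrant.

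The only step that is not purely formal---and therefore the main obstacle---is the first one: verifying that the image $\bar R$ is genuinely reflexive, symmetric and transitive as a subobject of $Y \times Y$. Granting this (it is a standard fact about regular categories and can simply be cited), the rest is a routine assembly of \refcoro{images}, \refprop{eqrel}, \refprop{quotients}, and the closure of Kan fibrations under pullback and composition.
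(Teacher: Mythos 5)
Your proof is correct and follows essentially the same route as the paper: replace $R$ by its image $\bar R$ in $Y\times Y$ (an honest equivalence relation with the same quotient), note that this mono is a Kan fibration by \refcoro{images} and that the projections $\bar R \to Y$ are fibrations since $Y$ is fibrant, then apply \refprop{eqrel} to get that $Y \to Y/R$ is a fibration and \refprop{quotients} (with base $1$) to get that $Y/R$ is fibrant. Your write-up merely makes explicit a couple of points the paper leaves implicit, namely that the image of a pseudo-equivalence relation is an equivalence relation and that the two coequalisers agree.
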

\begin{proof}
Without loss of generality we may assume that $R \to Y \times Y$ is monic: for otherwise we may replace $R \to Y \times Y$ by its image $S \subseteq Y \times Y$. This inclusion is again a Kan fibration by \refcoro{images} and the quotients $Y/R$ and $Y/S$ are isomorphic.

So assume $R \to Y \times Y$ is monic. Then it is an equivalence relation, and since $Y$ is fibrant, the projections $Y \times Y \to Y$ are Kan fibrations, and so are the projections $R \to Y$. So $Y \to Y/R$ is a Kan fibration by the previous proposition and $Y/R$ is fibrant according to \refprop{quotients}.
\end{proof}

\subsection{Application} Voevodsky has shown that if one restricts the Kan fibrations to those that have small fibres (for example, those whose fibres have a cardinality smaller than some inaccessible cardinal $\kappa$), then there is a generic small Kan fibration $\pi: E \to U$; that is, there is a Kan fibration with small fibres $\pi$ such that any other Kan fibration with small fibres can be obtained as a pullback of $\pi$. In addition, the object $U$ can be chosen to be fibrant (see \cite{voevodsky11,kapulkinetal12}).

We can use this generic Kan fibration $\pi$ to construct a model of constructive set theory: this is sometimes called the Aczel construction. It was originally discovered by  Peter Aczel in a type-theoretic context \cite{aczel78} and it was reformulated categorically in \cite{moerdijkpalmgren02}. The idea is to take the W-type associated to $\pi$ and then quotient by bisimulation.

So take $W(\pi)$, the W-type associated to $\pi$, and define the following endofunctor $\Phi$ on $\SSets/W(\pi) \times W(\pi)$:
\[ \Phi(X)_{{\rm sup}_u(t), {\rm sup}_{u'}(t')} = \prod_{e \in E_u} \sum_{e' \in E_{u'}} X_{t(e), t'(e')} \times \prod_{e' \in E_{u'}} \sum_{e \in E_{u}} X_{t(e), t'(e')}, \]
where we have used set-theoretic notation. This defines a dependent polynomial functor on $\SSets/W(\pi) \times W(\pi)$, for which we can take its initial algebra $B \to W(\pi) \times W(\pi)$: here we should think of an element in the fibre over a pair $({\rm sup}_u(t), {\rm sup}_{u'}(t'))$ as the type of proofs of the bisimilarity of ${\rm sup}_u(t)$ and ${\rm sup}_{u'}(t')$. This map $B \to W(\pi) \times W(\pi)$ is a pseudo-equivalence relation (as one may easily verify) and a Kan fibration by \reftheo{initalgdepfibrant}. Since $W(\pi)$ is fibrant by \reftheo{Wtypesinunivmodelwithdegree}, its quotient must be fibrant as well, by \refcoro{pseudoeqrel}. This means that if we perform the Aczel construction in the univalent model of type theory, we get a fibrant model of constructive set theory.

One may also dualize and take the M-type on $\pi$ and then quotient by the largest bisimulation (as in \cite{lindstrom89} and \cite{bergdemarchi07b}). This should result in a fibrant model of constructive set theory satisfying Aczel's Anti-Foundation Axiom \cite{aczel88}.

\section{Other model categories}

As we have seen above, the Quillen model category of simplicial sets provides an interpretation of Martin-L\"of type theory including W-types. The argument relied on the fact that W-types can be obtained by repeatedly, and possibly transfinitely, applying the polynomial to the initial object, as well as the fact that it is a model category \ct{E} for which:
\begin{enumerate}
\item[(1)] Trivial cofibrations are stable under pullback along fibrations in \ct{E}.
\item[(2)] If $X$ is the filtered colimit of $\{ X_i \, : \, i \in I \}$ and each $X_i \to A$ is a fibration, then so is the induced map $X \to A$.
\end{enumerate}
We recall that property (1) is equivalent to (1$'$), and is a consequence of the combined properties (1a) and (1b), which also hold in $\SSets$:
\begin{enumerate}
\item[(1$'$)] If $f: B \to A$ is a fibration then the right adjoint $\Pi_f: \ct{E}/B \to \ct{E}/A$ to the pullback functor preserves fibrant objects.
\item[(1a)] The cofibrations in \ct{E} are exactly the monomorphisms.
\item[(1b)] \ct{E} is right proper.
\end{enumerate}

Unfortunately, when trying to extend the argument to categories of simplicial presheaves, one discovers that these two conditions (1) and (2) generally seem to have rather incompatible stability properties. For example, while property (1) evidently transfers to the injective model structure on a category $\SSets^{\ct{C}\op}$ of simplicial presheaves, property (2) rarely does. And while property (2) evidently transfers to the projective model structure on simplicial presheaves, property (1) generally does not. One of the few exceptions to this is the case where \ct{C} is a group:

\begin{exam}{grpactions} (Group actions) Let $G$ be a group, and let $\SSets_G$ be the category of simplicial sets with right $G$-action. This category carries a (cofibrantly generated) model structure, with the property that the forgetful functor
\[ U: \SSets_G \to \SSets \]
preserves and reflects weak equivalences and fibrations. Since this forgetful functor commutes with $\Pi$-functors and filtered colimits, the category $\SSets_G$ again has properties (1) and (2). One can also check property (1) directly, since the cofibrations in $\SSets_G$ are the monomorphisms $X \to Y$ with the property that $G$ acts freely on the simplices of $Y$ which are not in (the image of) $X$.
\end{exam}

\begin{exam}{reedystr} (Reedy categories) We recall that a Reedy category is a category $\mathbb{R}$ equipped with two classes of maps $\mathbb{R}^-$ and $\mathbb{R}^+$ which both contain all the identities and are closed under composition, and a degree function $d: {\rm Objects}(\mathbb{R}) \to \NN$ for which
\begin{enumerate}
\item[(i)] any non-identity morphism in $\mathbb{R}^+$ raises degree, and any non-identity morphism in $\mathbb{R}^-$ lowers degree;
\item[(ii)] every morphism in $\mathbb{R}$ factors uniquely as a morphism in $\mathbb{R}^-$ followed by one in $\mathbb{R}^+$.
\end{enumerate}
If \ct{E} is a model category and $\mathbb{R}$ is a Reedy category, the functor category $\ct{E}^{\mathbb{R}}$ carries a model structure in which the weak equivalences are defined ``pointwise''; i.e., $X \to Y$ is a weak equivalence iff $X_r \to Y_r$ is for every $r \in \mathbb{R}$. The special virtue of this ``Reedy model structure'' is that the fibrations and cofibrations can be described explicitly in terms of so-called matching and latching objects. If $X$ is an object of $\ct{E}^{\mathbb{R}}$, the $r$th matching and latching objects of $X$ are defined as
\[ M_r(X) = \varprojlim_{r \xrightarrow{-} s} X_s \quad \mbox{ and } \quad L_r(X) = \varinjlim_{s \xrightarrow{+} r} X_s, \]
where the limit and colimit are taken over the non-identity maps in $\mathbb{R}^-$ and $\mathbb{R}^+$ respectively. A map $Y \to X$ is a \emph{fibration} in $\ct{E}^{\mathbb{R}}$ if, for any object $r \in \mathbb{R}$, the map
\[ Y_r \to X_r \times_{M_r(X)} M_r(Y) \]
is a fibration in \ct{E}. And, dually, a map $A \to B$ is a cofibration in $\ct{E}^{\mathbb{R}}$ if, for any object $r \in \mathbb{R}$, the map
\[ A_r \cup_{L_r(A)} L_r(B) \to B_r \]
is a cofibration in \ct{E}.

Typical examples are the simplex category $\Delta$ where $d([n]) = n$, while $\mathbb{R}^-$ consists of the surjections and $\mathbb{R}^+$ consists of the injections, or the category $\NN$ itself viewed as a poset (with $\NN = \NN^+$); the opposite categories $\Delta\op$ and $\NN\op$ are also Reedy categories, with $\mathbb{R}^+$ and $\mathbb{R}^-$ simply interchanged. In these examples and many others, the limits and colimits involved in the matching and latching objects are (essentially) finite. Let us say that a Reedy category $\mathbb{R}$ is \emph{locally finite} if each comma category $r/ \mathbb{R}^-$ contains a finite cofinal subcategory, so that the matching objects are defined by finite limits. Then clearly, if $\mathbb{R}$ is locally finite and $\ct{E}$ is a model category satisfying condition (2), then so does $\ct{E}^\mathbb{R}$.

Condition (1) seems to be less well-behaved with respect to arbitrary Reedy model structures. However, in many important examples the Reedy cofibrations in $\ct{E}^\mathbb{R}$ turn out to be the pointwise cofibrations. This is trivially the case if the category is ``inverse'', that is, if $\mathbb{R} = \mathbb{R}^-$ (and $\mathbb{R}^+$ contains identities only) as in $\NN\op$, but it also holds for ``elegant'' Reedy categories (see \cite{bergnerrezk12}) such as $\mathbb{R} = \Delta\op$. For now, let us state the following:
\begin{prop}{Reedycool}
Let \ct{E} be a model category satisfying conditions (1a, b) and (2). If $\mathbb{R}$ is a locally finite Reedy category for which the cofibrations in $\ct{E}^\mathbb{R}$ are pointwise (for example, if $\mathbb{R} = \Delta\op$ or $\mathbb{R} = \mathbb{N}\op$), then $\ct{E}^\mathbb{R}$ again satisfies these conditions.
\end{prop}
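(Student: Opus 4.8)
The plan is to verify one by one that $\ct{E}^{\mathbb{R}}$ inherits conditions (1a), (1b) and (2$'$), using throughout the standard structural facts about the Reedy model structure — weak equivalences are pointwise, a Reedy fibration is in particular a pointwise fibration, a Reedy fibrant object is pointwise fibrant, and for a Reedy fibration $Y \to X$ the induced maps $M_r(Y) \to M_r(X)$ on matching objects are fibrations (so, taking $X = 1$, the matching object of a Reedy fibrant object is fibrant) — together with the fact that limits and filtered colimits in $\ct{E}^{\mathbb{R}}$ are computed pointwise.

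For (1a), I would observe that by hypothesis a map $A \to B$ is a cofibration in $\ct{E}^{\mathbb{R}}$ exactly when each $A_r \to B_r$ is a cofibration in \ct{E}, which by (1a) for \ct{E} is the same as each $A_r \to B_r$ being a monomorphism; and a natural transformation is monic precisely when it is pointwise monic, since kernel pairs are computed pointwise. Hence the cofibrations of $\ct{E}^{\mathbb{R}}$ are exactly its monomorphisms. For (1b), I would take a weak equivalence $w$ pulled back along a Reedy fibration $p$; since pullbacks and weak equivalences are pointwise and $p$ is a pointwise fibration, at each object $r$ we obtain the pullback in \ct{E} of the weak equivalence $w_r$ along the fibration $p_r$, which is a weak equivalence by right properness of \ct{E}. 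So the pulled-back map is a pointwise weak equivalence, hence a weak equivalence in $\ct{E}^{\mathbb{R}}$, and $\ct{E}^{\mathbb{R}}$ is right proper.

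Condition (2$'$) is the observation already sketched just before the statement, which I would spell out as follows. Given a filtered diagram $\{ Y_i \to X_i \}$ of Reedy fibrations between Reedy fibrant objects, the relative matching map at $r$ of $\varinjlim_i Y_i \to \varinjlim_i X_i$ is, because $\mathbb{R}$ is locally finite — so that $M_r(-)$ and the defining pullback are finite limits, which commute with the filtered colimit — the filtered colimit of the relative matching maps $m_i \colon Y_{i,r} \to X_{i,r} \times_{M_r(X_i)} M_r(Y_i)$. Each $m_i$ is a fibration; $Y_{i,r}$ is fibrant; and $X_{i,r} \times_{M_r(X_i)} M_r(Y_i)$ is fibrant, being the pullback of the fibration $X_{i,r} \to M_r(X_i)$ (Reedy fibrancy of $X_i$) along the map from the fibrant object $M_r(Y_i)$ to the fibrant object $M_r(X_i)$. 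Thus $\{ m_i \}$ is a filtered diagram of fibrations between fibrant objects in \ct{E}, and (2$'$) for \ct{E} gives that its colimit $\varinjlim_i m_i$ is a fibration; so $\varinjlim_i Y_i \to \varinjlim_i X_i$ is a Reedy fibration.

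I do not expect a genuine obstacle in this proof: the one delicate question — for which Reedy categories the Reedy cofibrations coincide with the pointwise ones — has been absorbed into the hypothesis and will be treated in \refrema{reedycofibr}, so what remains is bookkeeping. The two points requiring care are to invoke precisely the correct package of standard facts about matching objects and Reedy fibrations, and to be sure that the finite matching and pullback limits genuinely commute with the filtered colimits in \ct{E} — which is exactly where local finiteness of $\mathbb{R}$ is used, and which holds in all the model categories $\ct{E}$ of interest here. The displayed cases $\mathbb{R} = \Delta\op$ and $\mathbb{R} = \mathbb{N}\op$ are then covered, since in both the Reedy cofibrations are pointwise — trivially for the inverse category $\mathbb{N}\op$, and as recalled in \refrema{reedycofibr} for $\Delta\op$.
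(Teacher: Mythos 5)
Your proof is correct and follows essentially the same route the paper intends: the paper offers no detailed argument, just the observation (in Example \refexam{reedystr}) that local finiteness makes the matching objects finite limits so that (2$'$) passes to $\ct{E}^{\mathbb{R}}$, while the pointwise-cofibration hypothesis together with pointwise weak equivalences and pointwise fibrations gives (1a) and (1b) -- exactly the three checks you carry out. Your explicit flag that one needs filtered colimits to commute with the finite matching and pullback limits in $\ct{E}$ is a point the paper leaves tacit (it holds in the intended examples such as $\SSets$ and $\SSets_G$), so if anything you are being more careful than the source.
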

\end{exam}
\begin{exam}{genreedy} (Generalised Reedy categories)
Although extremely useful in homotopy theory, the notion of Reedy category has various defects: it is not invariant under equivalence of categories, and excludes categories with non-trivial automorphisms. There is, however, a notion of ``generalised Reedy category'' which allows for the same construction of a model structure on $\ct{E}^\mathbb{R}$ from one on \ct{E}, and is more flexible. In particular, it includes important examples like the category $\cat{Fin}$ of finite sets, the category $\cat{Fin}_*$ of finite pointed sets (or equivalently, finite sets and partial maps) and the category $\Omega$ of trees. We refer to \cite{bergermoerdijk11} for details.

As before, (2) will not preserved in general, but it will be inherited whenever $\mathbb{R}$ is locally finite. Also (1) will hold in several important examples where they are satisfied. In fact, let $\mathbb{R}$ be a ``dualisable'' generalised Reedy category (cf.~\cite{bergermoerdijk11}) such as $\Delta, \Omega, \cat{Fin}_*$ and $\cat{Fin}$. It is perhaps useful to be more explicit about the cofibrations in the Reedy model structure on $\ct{E}^{\mathbb{R}\op}$. (We have passed to contravariant functors here because it fits the examples better.) First of all, recall from \cite{bergermoerdijk11} that in the case of a generalised Reedy category, the automorphisms of $\mathbb{R}$ and the model structure of \refexam{grpactions} enter into the description of the cofibrations. In particular, a map $X \to Y$ is a Reedy cofibration in $\ct{E}^{\mathbb{R}\op}$ iff for each object $r \in \mathbb{R}$, the map
\[ L_r(Y) \cup_{L_r(X)} X_r \to Y_r \]
is a cofibration in $\ct{E}^{{\rm Aut}(r)\op}$. Because we have passed to the dual $\mathbb{R}\op$, the latching object is now described as
\[ L_r(X) = \varinjlim_{r \xrightarrow{-} s} X_s, \]
the colimit ranging over all non-isomorphic maps $r \to s$ in $\mathbb{R}^-$; the ``surjections'' in the examples.  If $\ct{E} = \SSets$, then the cofibrations in $\ct{E}^{{\rm Aut}(r)\op}$ are characterised as the monos with ``free action on the complement'', as in \refexam{grpactions}; since this property is preserved by pullbacks, cofibrations will be stable under pullback. As these examples are also right proper, (1) will hold in these examples. In particular, this applies to the category $\SSets^{\Omega\op}$ of dendroidal spaces, $\SSets^{\Gamma\op}$ of $\Gamma$-spaces, and $\SSets^{\cat{Fin}\op}$ of symmetric simplicial sets. Hence these all satisfy properties (1) and (2) stated at the beginning of the section.
\end{exam}

\bibliographystyle{plain} \bibliography{WinHoTT}

\end{document}